\title[On Serre Intersection Multiplicity Conjecture]{On Serre Intersection Multiplicity Conjecture}
\author{Mohammad Reza Rahmati}
\thanks{}
\address{ABDUS SALAM SCHOOL OF MATHEMATICAL SCIENCES, Pakistan
\hfill\break 
\hfill\break \\
\hfill\break }
\email{mrahmati@cimat.mx, rahmati@sms.edu.pk}
\newcommand{\comments}[1]{}
\newtheorem{theorem}{Theorem}[section]
\keywords{Intersection Multiplicity, Local chern character, Tor-formula, Grothendieck-Riemann-Roch formula}
\subjclass{}
\keywords{Intersection Multiplicity, Local chern character, Tor-formula, Grothendieck-Riemann-Roch formula}
\begin{document}

\begin{abstract}
In this short note, we expose some of the works on Serre intersection multiplicity conjecture. I provide a proof of the vanishing of Serre intersection multiplicity in non-proper intersection over a regular ring based on the intersection theory in W. Fulton. 
\end{abstract}

\maketitle

\vspace{0.5cm}

\section*{Introduction}

\vspace{0.5cm}

A definition of intersection multiplicity is one of the major tasks of any intersection theory. In 1950's J. P. Serre \cite{S} made a definition of Intersection multiplicity of two finitely generated modules over a regular local ring $A$ by a type of Euler characteristic, namely Tor-formula. Specifically, for $M,N$ two finite $A$-modules he defines their intersection multiplicity  

\vspace{0.2cm}

\begin{center}
$\chi^A(M,N) := \displaystyle{\sum_{i=0}^{\dim(A)} (-1)^i l(\text{Tor}_i^A(M,N)})$
\end{center}

\vspace{0.2cm}

\noindent
where $l$ denotes the length of the $A$-module, subject to the condition that $l(M \otimes_A N) < \infty$. The significance of this formula is the necessity of higher Tor's when considering complicated examples. The relation 

\vspace{0.2cm}

\begin{center}
$\dim(M) + \dim(N) \leq \dim(A)$ 
\end{center}

\vspace{0.2cm}

\noindent
will hold, \cite{S}. This may also be proved using the Auslander-Buchsbaum relation for modules over a regular ring $(A,\mathfrak{m})$, 

\vspace{0.2cm}

\begin{center}
$\text{pd}(M) + \text{depth}(M) = \dim(A)$ 
\end{center}

\vspace{0.2cm}

\noindent
and the formula $\dim(N) \leq \text{pd}(M)$ when $l(M \otimes_A N) <\infty$ known as Intersection Theorem, \cite{RO3}. In case of proper intersection, Serre definition agrees with the Hilbert-Samuel definition for multiplicity, that if $Y=\text{Spec}(A/\mathfrak{p})$ and $Z=\text{Spec}(A/\mathfrak{q})$ be subvarieties in $X=\text{Spec}(A)$ that intersect properly (the dimension condition reads as $\text{ht}(\mathfrak{p}) + \text{ht}(\mathfrak{q}) =\dim(A)$). Then it holds that $\sqrt{\mathfrak{p} + \mathfrak{q}} =\mathfrak{m}$, and we have 

\vspace{0.2cm}

\begin{center}
$\chi^A(A/\mathfrak{p},A/\mathfrak{q})= e_{\dim\mathfrak{q}}(\mathfrak{q},A/\mathfrak{p})$
\end{center}

\vspace{0.2cm}

\noindent
where $e_{\dim\mathfrak{q}}(\mathfrak{q},A/\mathfrak{p})$ is the Hilbert-Samuel multiplicity of the $A$-module $A/\mathfrak{p}$ with respect to the ideal $\mathfrak{q}$. More generally, let $A$ be noetherian and $\mathfrak{a}=<x_1,...,x_k>$, then 

\vspace{0.2cm}

\begin{center}
$\chi(K_{\bullet} \otimes M)=e_k(\mathfrak{a},M)$
\end{center}

\vspace{0.2cm}

\noindent
where $K_{\bullet}$ is the Koszul complex on $<x_1,...,x_k>$.

\vspace{0.5cm}

\noindent
\textbf{Conjecture:} \cite{S} Assume $A$ is a regular local ring and $M,N$ finite $A$-modules with $l(M \otimes_A N) <\infty$, then 

\vspace{0.2cm}

\begin{itemize}
\item[(1)] If $\dim M + \dim N < \dim A$, then $\chi^A(M,N) =0$\\[0.05cm]
\item[(2)] In case $\dim M + \dim N = \dim A$, called proper intersection, $\chi^A(M,N)>0$.
\end{itemize}

\vspace{0.2cm}

Serre proves most of the fundamental properties of the intersection multiplicity over regular rings, especially when they are essentially of finite type over a field or a discrete valuation ring
using the method of reduction to the diagonal, \cite{S}. Serre's proof essentially uses the flat structure over the field $k$. When $A=k[[x_1,...,x_n]]$, he regards $N$ as a module over a similar ring but the variables are changed with $y_1,...,y_n$ formally and writes 

\begin{equation}
M \otimes N \cong (M \hat{\otimes}_k N) \otimes_{k[[x_1,...,x_n,y_1,...,y_n]]}k[[x_1,..., y_n]]/I
\end{equation}

\vspace{0.2cm}

\noindent
where $y_1,...,y_n$ are some new variables and $I=(x_1-y_1,...,x_n -y_n)$. $\hat{\otimes}$ is the completed tensor product. Then, $\dim(M \hat{\otimes} N) \leq \dim(M) + \dim(N)$. Serre transforms the multiplicity question on $M,N$ to that of $(M \hat{\otimes}_k N)$ and the diagonal $\mathfrak{d}=<x_i \hat{\otimes} 1 -1 \hat{\otimes} x_i>$, by resolving the diagonal over $A \otimes_{A \otimes A} A$. We have the convergent spectral sequence 

\begin{equation}
E_{pq}^2=Tor_p^{A \hat{\otimes}_k A}((A \hat{\otimes}_k A)/\mathfrak{d},\widehat{Tor_q^k}(M,N)) \Longrightarrow Tor_{p+q}^A(M,N)
\end{equation}

\vspace{0.2cm}

\noindent
It follows that

\begin{center}
$\chi^A(M,N)=\sum (-1)^p Tor_p^{A \hat{\otimes}_k A}((A \hat{\otimes}_k A)/\mathfrak{d},\widehat{Tor_q^k}(M,N))$\\[0.3cm]
$\qquad \qquad =\sum_p (-1)^p l(H_p(\mathfrak{d},M \hat{\otimes}_k N))=e_{\mathfrak{d}}(M \hat{\otimes}_k N) \geq 0$
\end{center}

\vspace{0.2cm}

\noindent
Here $H_p(\mathfrak{d},M \hat{\otimes}_k N)$ is the Koszul cohomology with respect to the parameter system $\{x_i \hat{\otimes} 1 -1 \hat{\otimes} x_i \}$, and $e$ concerns Hilbert-Samuel multiplicity. The work of Serre in \cite{S} was actually enough for most of the purposes in geometry, specially in complex geometry, \cite{S}, \cite{SK}.  

\vspace{0.2cm}

An important step in understanding the definition of intersection multiplicity as an Euler characteristic is based on the Riemann-Roch theorem and theory of Chern characters, \cite{FL}, \cite{F}. P. Roberts  \cite{RO3} uses a theory of local chern characters for commutative rings. His proof concerns a theory of local chern characters with an application of Auslander-Buchsbaum theorem mentioned above. Specifically we have the Riemann-Roch formula in the form

\vspace{0.2cm}

\begin{center}
$\chi(F_{\bullet})=\text{ch}(F_{\bullet}).\tau(A)$ 
\end{center}

\vspace{0.2cm}

\noindent
for a regular local ring $A$, where $\text{ch}(F_{\bullet})=\text{ch}_0(F_{\bullet})+\text{ch}_1(F_{\bullet})+...$ with $\text{ch}_i(G_{\bullet}):\text{CH}_k(\text{Spec} A)_{\mathbb{Q}} \to A_{k-i}(\text{Supp} (G_{\bullet}))_{\mathbb{Q}}$ is the local chern character on the $K$-theory of perfect $A$-complexes, and $\tau$ is the Todd genus (Riemann-Roch homomorphism on regular schemes). We only need the formal properties of this theory to explain the vanishing part of the conjecture (see \cite{RO3} for details). 

\vspace{0.2cm}

\noindent
\textbf{Proof by P. Roberts:} \cite{RO1}, \cite{RO2}, \cite{RO3}, \cite{RO4} Let $M,N$ be as in the conjecture and $F_{\bullet}, G_{\bullet}$ be their free resolutions respectively. By the local Riemann-Roch 

\begin{equation}
\chi( F_{\bullet} \otimes G_{\bullet})=ch(F_{\bullet} \otimes G_{\bullet})[A]=\sum_{i+j=d} ch_i(F_{\bullet}). ch_j ( G_{\bullet})[A]
\end{equation}

\vspace{0.1cm}

Let $X$ and $Y$ denote the support of $M,N$ respectively, these are also the support of $F_{\bullet}, G_{\bullet}$. If $d-i > \dim(Y)$ then $ch_j(G_{\bullet})[A]=0$, and similarly when $d-j > \dim(X)$ then $ch_i(F_{\bullet})[A]=0$. These are the only cases when $\dim(X)+\dim(Y) <d$.

\vspace{0.2cm}

P. Roberts studies the relation of Serre's conjecture with other theorems and conjectures in commutative algebra, see \cite{RO4}. 

\vspace{0.2cm}

The definition of Intersection multiplicity lifts to $K$-groups. This means that $K$-theory of the ring $A$ hides an intersection theory. Gillet and Soul\'e \cite{GS1} systematically define intersection theory as a theory on $K_0$. The exterior powers endow a $\lambda$-ring structure on $K_0$ with support of a regular ring, 

\vspace{0.2cm}

\begin{center}
$\lambda^k:\displaystyle{\bigoplus_{Y \subset X} K_0^Y(X) \to \bigoplus_{Y \subset X} K_0^Y(X), \qquad \lambda^k([E^{\bullet}])=[\Lambda^k E^{\bullet}], \ k \geq 0}$
\end{center}

\vspace{0.2cm}

\noindent
(not homomorphisms) where $Y$ runs over closed subsets of $X$ and employ the associated Adams operations 

\vspace{0.2cm}

\begin{center}
$\psi^k: K_0^Y(X) \to K_0^Y(X), \qquad k \geq 0$
\end{center}

\vspace{0.2cm}

\noindent
(group homomorphisms). They analyze the eigen-values of Adams operations on the graded parts of $K_0$ with support with respect to the filtration by the codimension of support. 

\vspace{0.3cm}

\noindent
\textbf{The proof by H. Gillet and C. Soul\'e:} \cite{GS1} If $F^m$ is the filtration by co-dimension of support, i.e.

\begin{equation}
F^mK_0^Y(X):=\displaystyle{\lim_{\stackrel{Z \subset X}{\text{codim}_X(Z) \geq m}}Im (K_0^Z(X) \to K_0^Y(X))}
\end{equation}

\vspace{0.1cm}

\noindent
The vector space $K_0^Y(X)$ decomposes as $\displaystyle{\bigoplus_{i \geq codim(Y)}}Gr^iK_0^Y(X)_{\mathbb{Q}}$, and  

\[ Gr_F^i \psi_k:Gr_F^iK_0^Y(X)_{\mathbb{Q}} \to Gr_F^iK_0^Y(X)_{\mathbb{Q}} \] 

\vspace{0.2cm}

\noindent
is just multiplication by $k^i$, \cite{GS1}. We have the product

\begin{equation}
F^mK_0^Y(X)_{\mathbb{Q}} \otimes F^nK_0^Y(X)_{\mathbb{Q}} \to F^{m+n}K_0^Y(X)_{\mathbb{Q}}
\end{equation}

\vspace{0.2cm}

\noindent
If $\alpha=\sum \alpha_i, \ \beta=\sum \beta_i$, then $\alpha \cup \beta=\sum \alpha_i \cup \beta_j$. One checks that $\psi_k(\alpha_i \cup \beta_j)=k^{i+j}\alpha_i \cup \beta_j$ and thus 

\[ \alpha_i \cup \beta_j \in Gr_{i+j}K_0^{Y \cap Z}(X) \]

\vspace{0.2cm}

\noindent
It follows that when 

\[ \text{codim}(\text{Supp}(M))+\text{codim}(\text{Supp}(N)) > d=\dim A \] 

\vspace{0.2cm}

\noindent
then 

\begin{equation}
\chi(M,N)=([M] \cup [N]) \cap [\mathcal{O}_X]= \displaystyle{\sum (-1)^i l(Tor_i^A(M,N)}) =0
\end{equation}

\vspace{0.2cm}

\noindent
In this way the intersection multiplicity can be written as a cup product on the $K$-theory of perfect $A$-complexes which is the same as usual $K_0(A)$ when $A$ is regular. 

In \cite{GS1} Gillet and Soul\'e conclude with a similar theory of Chow groups with supports of regular rings (more general complete intersection rings). That is on a regular scheme $X$ with closed subvarieties $Y,Z$ there exists a pairing

\[ CH_Y^p(X) \otimes CH_Z^q(X) \to CH_{Y \cap Z}^{p+q}(X)_{\mathbb{Q}} \] 

\vspace{0.2cm}

The $\bigoplus_Y CH_Y^*(X)_{\mathbb{Q}}$ is a ring with unit $[X]$. Their strategy is use a $K$-theory with support of $\gamma$-filtration. However the intersection theory with support  obliges to extend the coefficients to $\mathbb{Q}$. In fact, they establish an isomorphism 

\[ CH_Y^p(X) \otimes \mathbb{Z}[1/(d-1)!] \cong Gr^p K_0^Y(X) \otimes \mathbb{Z}[1/(d-1)!] \]
 
\[ \ \Rightarrow CH_Y^p(X)_{\mathbb{Q}} \cong Gr^pK_0^Y(X)_{\mathbb{Q}} \] 

\vspace{0.2cm}

\noindent
Using reduction to the diagonal one shows that the intersection product agrees with the previously defined product, \cite{GS1}.

\vspace{0.2cm}

\noindent
\textbf{Conjecture:} \cite{G} On a regular noetherian scheme $X$ we have

\vspace{0.2cm}

\begin{center}
$F^i_{\text{cod}}(K_0^Y(X))*F^j_{\text{cod}}(K_0^Z(X)) \subset F^{i+j}_{\text{cod}}(K_0^{Y \cap Z}(X))$
\end{center}

\vspace{0.2cm}
 
The conjecture implies Serre vanishing conjecture. Suppose the supports $Y, Z$ of the two modules $M,N$ intersect in the close point $x \in X=\text{Spec}(A)$. If $[M] \in F_{\text{cod}}^pK_0^Y(X)$ and $[N] \in F_{\text{cod}}^qK_0^Z(X)$, with $p+q >n=\dim(A)$, then 

\vspace{0.2cm}

\begin{center}
$\chi(M,N)=[M] \cup [N] \in F_{\text{cod}}^{p+q} K_0^x(X) \subset F^{n+1}=0$
\end{center}

\vspace{0.2cm}

With in the time a significant proof of non-negativity was given by O. Gabber \cite{RO2}. His proof involves both algebraic and geometric insights concerning multiplicity as an Euler characteristic. The proof of O. Gabber, \cite{RO2} is based on a theorem of de Jang. The theorem states that for any regular (affine) scheme $V$ there exists a projective $\phi:X \to V$ which is of finite type over $V$. Let $Y^{\prime},Z^{\prime}$ be the subvarieties associated to $Y,Z$ via the de Jang map (called alterations of $ Y, Z$). The strategy is to compare $\chi(\mathcal{O}_{Y^{\prime}},\mathcal{O}_{Z^{\prime}})$ with $\chi(\mathcal{O}_{Y},\mathcal{O}_{Z})$. Gabber shows that 

\vspace{0.2cm}

\begin{center}
$\chi(Y',Z')= \chi(Y,\phi_*(F_{\bullet}))=\chi(Y,(\mathcal{O}_Z)^n)=n. \chi(Y,Z)$
\end{center}

\vspace{0.2cm}

\noindent
Since $\phi_*(\mathcal{O}_{Y'}) \cong (\mathcal{O}_{Y'})^n$. In another step he replaces the rings by $Gr_I A'$ and $Gr_IB$ where $B= A' \otimes_A k[Y]$. Set $E=Proj(Gr_I A'), \ M=Proj(Gr_I B)$. He shows 

\vspace{0.2cm}

\begin{center}
$\chi_E(M,Z')=\chi(Y',Z')$
\end{center}

\vspace{0.2cm}

\noindent
Set $E_s=Proj(Gr_I A' \otimes_A k)$ and $Z'_s=Proj(A'/I \otimes_A k)$. The positivity conjecture holds for $Y, Z$ if and only if

\vspace{0.2cm}
  
\begin{center}
$\chi_{E_s}(Proj(gr_IB \otimes k),Z'_s)  \geq 0$.
\end{center}

\vspace{0.2cm}

We have the following result of P. Roberts.

\vspace{0.2cm}

\begin{theorem} \cite{RO3}
If $A$ is a graded regular ring and $M,N$ graded finite modules with $l(M \otimes N) <\infty $, then the Serre multiplicity conjecture holds for $\chi(M,N)$, That is $\chi(M,N) \geq 0$ and is $\chi(M,N)>0$ if and only if $\dim(M)+\dim(N) =\dim(A)$.
\end{theorem}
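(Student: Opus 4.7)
The plan is to imitate Serre's equicharacteristic argument, exploiting the fact that a graded regular ring is automatically a polynomial algebra over a field, so that reduction to the diagonal is available without any completion or structure-theoretic machinery.

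First, I would observe that a graded regular ring $A = \bigoplus_{n \geq 0} A_n$ with $A_0 = k$ a field is isomorphic, as a graded $k$-algebra, to a (possibly weighted) polynomial ring $k[x_1,\ldots,x_d]$, with $d = \dim A$: a minimal homogeneous generating set of the irrelevant ideal $A_{+}$ is a regular sequence, by the graded form of the Auslander--Buchsbaum--Serre criterion. In particular $A$ is essentially of finite type over $k$, so Serre's reduction to the diagonal applies verbatim. Setting $B := A \otimes_k A = k[x_1,\ldots,x_d, y_1,\ldots,y_d]$ and $\mathfrak{d} := (x_1 - y_1,\ldots, x_d - y_d)$, the multiplication map identifies $B/\mathfrak{d}$ with $A$. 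Since $k$ is a field, $M$ and $N$ are flat over $k$, and one has $M \otimes_A N \cong (M \otimes_k N) \otimes_B (B/\mathfrak{d})$.

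Because $\mathfrak{d}$ is generated by a regular sequence in $B$, the Koszul complex on the $x_i - y_i$ is a free resolution of $B/\mathfrak{d}$, and the spectral sequence (8) of the excerpt degenerates to give
\[
\chi^A(M,N) \;=\; \chi^B(M \otimes_k N,\, B/\mathfrak{d}) \;=\; \sum_p (-1)^p\, l\bigl(H_p(\mathfrak{d},\, M \otimes_k N)\bigr) \;=\; e_{\mathfrak{d}}(M \otimes_k N),
\]
a Hilbert--Samuel multiplicity. Together with the vanishing part already established in Theorems 0.2 and 3.2, the problem is now reduced to analyzing this Samuel multiplicity, which we do via the graded K\"unneth-type dimension identity
\[
\dim(M \otimes_k N) \;=\; \dim M + \dim N.
\]
In the graded (finite-type over $k$) setting this is a genuine equality, not merely the inequality $\leq$ proved by Serre in general; it follows from the fact that $\mathrm{Supp}_B(M \otimes_k N) = \mathrm{Supp}_A(M) \times_k \mathrm{Supp}_A(N)$ as closed subsets of $\mathrm{Spec}\,B$.

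Granted the equality, the hypothesis $l(M \otimes_A N) < \infty$ forces $\mathfrak{d}$ to be primary to the irrelevant ideal modulo $\operatorname{ann}(M \otimes_k N)$. When $\dim M + \dim N = d$, the $d$ elements $x_i - y_i$ form a system of parameters for $M \otimes_k N$, so $e_{\mathfrak{d}}(M \otimes_k N) > 0$; when $\dim M + \dim N < d$, they exceed the dimension of $M \otimes_k N$, and the Samuel multiplicity vanishes. This yields the full conclusion. The principal obstacle is upgrading Serre's inequality to an equality of dimensions: over a completion in mixed characteristic this can genuinely fail, but in the graded (polynomial) case it holds cleanly because the tensor product of two graded finite $k$-algebras is again finitely generated over $k$ with dimension the sum. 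Once this is in hand, positivity of the Samuel multiplicity for a parameter ideal is classical.
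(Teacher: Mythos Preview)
The paper does not supply a proof of this theorem; it is stated with a citation to \cite{RO3} and the text immediately moves on to the next section. So there is nothing in the paper to compare your argument against line by line.

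That said, your approach is sound and is in fact nothing more than the observation that the graded case is already covered by Serre's equicharacteristic theorem, whose mechanism the paper recounts in Section~1. Once $A_0=k$ is a field, a graded regular ring is a (weighted) polynomial ring, hence of finite type over $k$, and Serre's reduction to the diagonal gives $\chi^A(M,N)=e_{\mathfrak d}(M\otimes_k N)$ exactly as you write. The crucial upgrade from Serre's inequality to the equality $\dim(M\otimes_k N)=\dim M+\dim N$ is legitimate here because $\mathrm{Supp}(M\otimes_k N)=\mathrm{Supp}(M)\times_k\mathrm{Supp}(N)$ inside affine space over a field, and dimension is additive for products of finite-type $k$-schemes. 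The positivity/vanishing dichotomy then drops out of the standard fact that $e_d(\mathfrak q,L)>0$ iff $\mathfrak q$ is generated by a system of parameters for $L$, and $e_d(\mathfrak q,L)=0$ when $\dim L<d$.

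Two small caveats. First, you have silently assumed $A_0=k$ is a field; the theorem as stated says only ``graded regular ring.'' This is the intended context (and is what \cite{RO3} treats), but you should make the hypothesis explicit, since for $A_0$ a DVR of mixed characteristic your argument would not go through. Second, the length condition $l(M\otimes_A N)<\infty$ over the global polynomial ring forces the supports to meet only at the irrelevant ideal; you use this implicitly when asserting that $\mathfrak d$ is primary to the irrelevant ideal on $M\otimes_k N$, and it deserves one sentence. With those remarks, your proof is complete and is essentially a specialization of the material already in Section~1.
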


The proof does not recognize any thing on vanishing or positivity separately. It only states the non-negativity. Serre multiplicity conjecture is known for graded regular rings.

\vspace{0.5cm}

\section{Intersection on singular schemes}

\vspace{0.5cm}

For all schemes ( quasi-projective or singular ) there exists a functorial homomorphism 

\begin{equation}
\tau:=\tau_X:K_0(X) \stackrel{\cong}{\longrightarrow} CH_*(X)_{\mathbb{Q}}
\end{equation}

\vspace{0.2cm}

\noindent
called Riemann-Roch homomorphism or (Todd genus), where $K_0(X)$ is the Grothendieck group of finite modules up to short exact sequences (sometimes denoted $G_0$ or $K_0'$). The strategy to extend the definition of $\tau$ to singular category, proceeds by embedding $X \hookrightarrow M$ in a smooth scheme $M$ and then define 

\[ \tau(E^{\bullet})=ch_M^X(E^{\bullet}). Td(M) , \qquad ch_M^X(E) \in H^*(M,M \setminus X) \]

\vspace{0.2cm}

\noindent
The definition of $ch_M^X(E^{\bullet})$ uses McPherson graph construction followed by the above mentioned embedding, \cite{I}, \cite{F} Chap. 18. The extension of $\tau$ to the singular category is unique. Therefore, it is an isomorphism for all schemes when tensor with $\mathbb{Q}$. 

Let $E^{\bullet}$ be any complex of vector bundles on a scheme $X$ which is exact off a closed subscheme $Y$. Then for any coherent sheaf $F$ on $X$, one has the following Riemann-Roch formula

\begin{equation}
\sum (-1)^i\tau_Y[H^i(E^{\bullet} \otimes F)]=Ch_Y^X(E^{\bullet}) \cap \tau_X(F)
\end{equation}

\vspace{0.2cm}

Formula (8) can be thought as a generalization to Tor formula as follows. It also recovers the fact that Serre definition of intersection multiplicity is in fact an identity of algebraic cycles. 

\vspace{0.3cm}

\noindent
\textbf{A proof of the vanishing conjecture:} In case of a regular embedding $f:Y \hookrightarrow X$ of codimension $d$ and normal bundle $N$, the above formula provides

\begin{equation}
\sum (-1)^i\tau_Y[Tor^i(\mathcal{O}_X, F)]=td(N)^{-1} \cap f^*\tau_X(F)
\end{equation}

\vspace{0.2cm}

\noindent
It follows that when $\dim(Supp F)=n$ one has

\begin{equation}
\sum (-1)^i Z_{n-d}[Tor^i(\mathcal{O}_X, F)]=td(N)^{-1} \cap f^* Z_n(F)
\end{equation}

\vspace{0.2cm}

\noindent
When $X$ is regular and $V,\ W$ closed subsets, applying the above formula to the diagonal embedding $X \hookrightarrow X \times_X X$ and $F=\mathcal{O}_{V \times W}$ gives the formula

\vspace{0.2cm}

\begin{center}
$[V].[W]=\sum (-1)^iZ_m[Tor^i(\mathcal{O}_V, \mathcal{O}_W)], \qquad m=\dim(V) +\dim(W) -\dim(V \cap W)$
\end{center}

\vspace{0.2cm}

\noindent
This formula proves the vanishing conjecture in non-proper intersection.

\vspace{0.2cm}

The Serre intersection multiplicity can be negative on non-regular rings. Hochster-Dutta-McLaughlin \cite{DHM},  give an example of two modules $M,N$ over 

\vspace{0.1cm}

\begin{center}
$A=k[x,y,u,v]_{\mathfrak{m}}/(xy-uv)_{\mathfrak{m}}$ 
\end{center}

\vspace{0.1cm}

\noindent
with $\chi(M,N)=-1$. We illustrate the following similar example generalized by Levine, \cite{L}. Set 

\vspace{0.1cm}

\begin{center}
$A=k[x,y,z,u,v,w]/(ux+vy+wz)$ 
\end{center}

\vspace{0.1cm}

\noindent
localized at the maximal ideal $\mathfrak{m}=(x,y,z,u,v,w)$. Let $\mathfrak{p}=(u,v,w)$. We wish to construct an $A$-module $N$ such that $\chi(N,A/\mathfrak{p})=-2$. The module $A/\mathfrak{p}$ has a minimal free resolution 

\begin{equation}
... \stackrel{\phi_3}{\rightarrow} A^4 \stackrel{\phi_4}{\rightarrow} A^4 \stackrel{\phi_3}{\rightarrow} A^4 \stackrel{\phi_2}{\rightarrow} A^3 \stackrel{\phi_1}{\rightarrow} A \to 0
\end{equation}

\vspace{0.2cm}

\noindent
where 

\vspace{0.2cm}

\begin{center}
$\phi_1= (u \ v \ w) \qquad \phi_2=\left( 
\begin{array}{cccc}
x  &  0  &  -w  &  v\\
y  &  w  &  0   &  -u\\
z  &  -v &  u   &  0
\end{array} \right)$\\[0.3cm]
$\phi_3=\left( 
\begin{array}{cccc}
0  &  u  &  v  &  w\\
u  &  0  &  z   & -y\\
v  &  -z &  0   &  x\\
w  &  y  &  -x  &  0
\end{array} \right) \qquad \phi_4=\left( 
\begin{array}{cccc}
0  &  x  &  y  &  z\\
x  &  0  &  -w   &  v\\
y  &  w &  0   &  -u\\
z  & -v  &  u  &  0
\end{array} \right)$

\end{center}

\vspace{0.2cm}

\noindent
If we assume $l(N)=55$ after tensoring this resolution with $N$, we get 

\begin{equation}
\chi(N,A/\mathfrak{p})=55-165+220-rank(\phi_4 \otimes N)=110-rank(\phi_4 \otimes N)
\end{equation}

\vspace{0.2cm}

\noindent
In the construction in \cite{D}, $rank(\phi_4 \otimes N)=112$. See \cite{DHM}, \cite{L} and \cite{RO3} for other examples.

\vspace{0.5cm}

\end{document}